\newcommand{\n}{\noindent}
\newcommand{\bb}[1]{\mathbb{#1}}
\newcommand{\cl}[1]{\mathcal{#1}}
\newcommand{\sst}{\scriptstyle}
\theoremstyle{definition}
\newtheorem{defn}{Definition}
\theoremstyle{plain}
\newtheorem*{lm}{Lemma}
\newtheorem*{prop}{Proposition}
\newtheorem{thm}[defn]{Theorem}
\newtheorem{cor}[defn]{Corollary}
\theoremstyle{remark}
\newtheorem{rem}[defn]{Remark}
\begin{document}

\title{Completely co-bounded Schur multipliers}

\author{by\\
Gilles Pisier\footnote{Partially supported by NSF grant 0503688.}\\
Texas A\&M University\\
College Station, TX 77843, U. S. A.\\
and\\
Universit\'e Paris VI\\
Equipe d'Analyse, Case 186, 75252\\
Paris Cedex 05, France}

\date{}
\maketitle

\begin{abstract}  A linear map $u\colon \ E\to F$ between operator spaces
is called completely co-bounded if it is completely bounded as a map
from $E$ to the opposite of $F$. We give several simple results
about completely co-bounded Schur multipliers on $B(\ell_2)$ and
the Schatten class $S_p$. We also consider Herz-Schur
multipliers on groups.
\end{abstract}

\n In this short note, we wish to draw attention to
the notion of ``completely co-bounded' mapping between two operator spaces. Recall
that  an operator space can be defined as a Banach space  
$E$  given together with an isometric embedding $E\subset B(H)$
into the space $B(H)$   of all bounded operators on a Hilbert space $H$.
The theory of operator spaces started around 1987 with Ruan's thesis and has been considerably developed after that (notably by Effros-Ruan and Blecher-Paulsen, see \cite{ER,P4}), with applications mainly  to Operator Algebra Theory. In this theory, the morphisms between operator spaces
are the completely bounded maps (c.b. in short), defined as follows.
First note that if  $E\subset B(H)$ is any  subspace, then the space $M_n(E)$ of $n\times n$ matrices with entries in $E$ inherits the norm induced by $M_n(B(H))$.
The latter space is of course itself equipped with the norm of 
single operators acting naturally on $H\oplus\cdots\oplus H$ ($n$ times). Then,
a linear map $u\colon \ E\to F$ is called completely bounded (c.b.\ in short) if
\begin{equation}\label{eq11.0}
 \|u\|_{\text{cb}} \overset{\sst \text{def}}{=} \sup_{n\ge 1} \|u_n\colon \ M_n(E)\to M_n(F)\| < \infty
\end{equation}
where, for each $n\ge 1$, $u_n$ is defined by $u_n([a_{ij}]) = [u(a_{ij})]$. One denotes by $CB(E,F)$ the space of all such maps.

Given an operator space $E$,
the opposite $E^{op}$  is the same Banach space
as $E$, but equipped with the operator space structure (o.s.s. in short)
associated to any embedding $E\subset B(H)$
such that for any $a=[a_{ij}]\in M_n(E)$, we have
 $\|a\|_{M_n(B(H))}=\| [a_{ji}]  \|_{M_n(E)}$. 
 Thus $\|a\|_{M_n(E^{op})}=\| [a_{ji}]  \|_{M_n(E)}$.  It is easy to check that the (isometric linear) mapping
 $T\mapsto {}^{t}T \in B(H^*)$ realizes
 such an embedding (warning: here ${}^{t}T\colon\  H^*\to H^*$ designates the adjoint of $T$ in the Banach space sense).
 
 We call a map $u\colon \ E\to F$ between operator spaces   completely co-bounded if the same map
 is c.b. from $E$ to $F^{op}$.
 This definition is inspired by existing work on completely co-positive maps (cf. e.g. \cite{MM,M}
  and references there). I started to think about this notion after hearing   Marciniak's lecture on co-positive multipliers  at the 2004 Quantum probability conference in Bed\l ewo.

 While this definition seems at first glance a pointless variation,
 easy to reduce to the usual case,
 we hope in what follows to  convince the reader 
that it has a natural place in operator space theory and that it suggests   many interesting questions.
As a  first motivation for this notion, we should mention that the
non-commutative Grothendieck theorem, that came out of work  by the author and Haagerup, can be rephrased as saying that,
if $A,B$ are $C^*$-algebras,
any bounded linear mapping $u\colon \ A\to B^*$  is the sum
of a c.b. mapping and a co-c.b. one, see  \cite[p. 189]{PS} for details and more references.

\begin{defn}\label{compdefn1}
A linear map $u\colon \ E\to F$ between operator spaces will be called completely co-bounded if it is completely bounded as a mapping from $E$ into $F^{op}$ the opposite operator space. We then denote
\[
\|u\|_{cob} = \|u\colon \ E\to F^{op}\|_{cb}.
\]
\end{defn}

\begin{rem}\label{rem1} Obviously, $\|u\colon \ E\to F^{op}\|_{cb}=
\|u\colon \ E^{op}\to F\|_{cb}$
and  $(F^{op})^*={F^*}^{op}$ completely isometrically. Therefore, $u\colon \ E\to F$ is completely co-bounded iff
the same is true for $u^*$ and  $\|u\|_{cob}=\|u^*\|_{cob}$, since this is valid for
c.b. maps (cf. e.g. \cite{ER,P4}).
\end{rem}
\begin{rem}\label{rem2}Clearly if $B$ is a $C^*$-algebra with $F\subset B$ and if $\alpha\colon B\to B$ is an anti-automorphism
(for instance transposition on $B(\ell_2)$),
then $u\colon \ E\to F$ is completely co-bounded iff $\alpha u$ is c.b. and
$\|u\|_{cob} =\|\alpha u\|_{cb}$.
It is well known that the transposition on $M_n$ has c.b. norm equal to $n$ (cf. e.g. \cite[p. 418-419]{P4}).
Therefore, the identity map on $B(H)$ is \emph{not} completely co-bounded unless
$H$ is finite dimensional. More generally, the identity map on a von Neumann algebra $B$
is completely co-bounded iff  $B$ is of type $I_n$ for some finite $n$, i.e. iff $B$ is a  direct sum
of finitely many algebras of the form $M_n \otimes A_n$, with $A_n$ commutative.
\end{rem}

At first glance, the reader may have serious doubts for the need of the preceding notion ! but hopefully the next result will provide some justification.

\begin{thm}\label{compthm2}
A Schur multiplier $M_\varphi\colon \ [x_{ij}]\to [\varphi_{ij}x_{ij}]$ is completely co-bounded on $B(\ell_2)$ iff the matrix $[|\varphi_{ij}|]$ defines a bounded operator on $\ell_2$ and we have
\begin{equation}\label{compeq1}
\|M_\varphi\|_{cob} = \|TM_\varphi\|_{cb} = \|[|\varphi_{ij}|]\|_{B(\ell_2)}
\end{equation}
where $T\colon \ B(\ell_2)\to B(\ell_2)$ denotes the transposition. Moreover, if $\|M_\varphi\|_{cob}\le 1$, then $M_\varphi$ admits a factorization
\[
B(\ell_2) \overset{\sst J}{\longrightarrow} \ell_\infty({\bb N}\times {\bb N}) \overset{\sst M_\varphi}{\hbox to 25pt{\rightarrowfill}} B(\ell_2)
\]
where $J$ is the natural  inclusion map and where $\|M_\varphi\|_{cb}\le 1$.
\end{thm}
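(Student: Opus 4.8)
The plan is to reduce everything to the auxiliary map $M_\varphi^{(2)}\colon \ell_\infty(\bb{N}\times\bb{N})\to B(\ell_2)$ sending a bounded array $(\xi_{ij})$ to the matrix $[\varphi_{ij}\xi_{ij}]$, and to show that all three quantities in \eqref{compeq1} coincide with $\|M_\varphi^{(2)}\|_{cb}$. First I would record that, by Remark \ref{rem2} applied to the transposition anti-automorphism $T$, one has $\|M_\varphi\|_{cob}=\|TM_\varphi\|_{cb}$, where $TM_\varphi(x)=(M_\varphi x)^t$; this is the middle equality of \eqref{compeq1} and it turns the problem into the computation of an honest $cb$-norm.

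Next comes the amplification estimate for the auxiliary map. I claim $\|M_\varphi^{(2)}\colon \ell_\infty\to B(\ell_2)\|_{cb}=\|[|\varphi_{ij}|]\|_{B(\ell_2)}$. For the inequality ``$\le$'', I would take $X\in M_N(\ell_\infty)$ with $\|X\|\le1$, write its value at $(i,j)$ as $X_{ij}\in M_N$ (so $\|X_{ij}\|\le1$), and test the image operator on vectors $\zeta=\sum_j e_j\otimes\zeta_j$ and $\eta=\sum_i e_i\otimes\eta_i$; since $|\langle X_{ij}\zeta_j,\eta_i\rangle|\le\|\zeta_j\|\,\|\eta_i\|$, the quantity $\langle(\mathrm{id}\otimes M_\varphi^{(2)})(X)\zeta,\eta\rangle$ is dominated by $\langle[|\varphi_{ij}|]\,s,t\rangle$ with $s=(\|\zeta_j\|)$ and $t=(\|\eta_i\|)$ unit vectors, hence by $\|[|\varphi_{ij}|]\|$. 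The reverse inequality already holds at $N=1$ by aligning the phases of the scalar entries. This gives the third equality of \eqref{compeq1} for $M_\varphi^{(2)}$ and, incidentally, shows that this particular map out of a commutative $C^*$-algebra is automatically $cb$.

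For the upper bound and the factorization I would use that $\ell_\infty$ carries its minimal operator space structure, so the natural inclusion $J\colon B(\ell_2)\to\ell_\infty(\bb{N}\times\bb{N})$ is a complete contraction, and that $\ell_\infty=\ell_\infty^{op}$ completely isometrically (transposition preserves the norm of each $M_N$-valued fibre). Since $M_\varphi=M_\varphi^{(2)}\circ J$, composing and passing to opposites as in Remark \ref{rem1} yields $\|M_\varphi\|_{cob}\le\|J\|_{cb}\,\|M_\varphi^{(2)}\|_{cb}=\|[|\varphi_{ij}|]\|$; when $\|M_\varphi\|_{cob}\le1$ this is precisely the asserted factorization, with the second leg of $cb$-norm $\le1$.

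The hard part is the matching lower bound $\|TM_\varphi\|_{cb}\ge\|[|\varphi_{ij}|]\|$, and this is where I expect the real work. It suffices to treat the $n\times n$ corner $M_n\subset B(\ell_2)$ and produce, for each $n$, a norm-one test element whose image is large. Writing $\varphi_{ij}=\omega_{ij}|\varphi_{ij}|$ with $|\omega_{ij}|=1$, denoting by $e_{ab}=e_ae_b^*$ the matrix units and by $e_k$ the standard basis, and using $(TM_\varphi)(e_{ab})=\varphi_{ab}e_{ba}$, I would take the phase-corrected ``weighted swap''
\[
Y=\sum_{i,j}\overline{\omega_{ji}}\;e_{ij}\otimes e_{ji}\in M_n\otimes M_n .
\]
A direct check shows that $Y$ is block-diagonal in the pairs $\{e_k\otimes e_l,\,e_l\otimes e_k\}$ with unimodular entries, so that $\|Y\|=1$, while
\[
(\mathrm{id}\otimes TM_\varphi)(Y)=\sum_{i,j}|\varphi_{ji}|\;e_{ij}\otimes e_{ij}
\]
is supported on the diagonal subspace $\mathrm{span}\{e_k\otimes e_k\}$, where it acts as the matrix $[|\varphi_{ji}|]_{ij}$, of norm $\|[|\varphi_{ij}|]\|_{B(\ell_2^n)}$. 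Letting $n\to\infty$ gives $\|M_\varphi\|_{cob}\ge\|[|\varphi_{ij}|]\|_{B(\ell_2)}$, which together with the previous paragraph yields all the equalities in \eqref{compeq1}, the stated factorization, and (reading off finiteness) the boundedness criterion. The crux is really the choice of $Y$: the swap converts the transposition built into $\|\cdot\|_{cob}$ back into a genuine diagonal matrix, while the phases $\overline{\omega_{ji}}$ are exactly what is needed to replace each $\varphi_{ji}$ by $|\varphi_{ji}|$ without increasing $\|Y\|$.
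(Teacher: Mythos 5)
Your argument is correct and follows essentially the same route as the paper: the upper bound and the factorization come from writing $M_\varphi={\cl M}_\varphi J$ through $\ell_\infty({\bb N}\times{\bb N})=\ell_\infty({\bb N}\times{\bb N})^{op}$ with $\|{\cl M}_\varphi\|_{cb}\le\|[|\varphi_{ij}|]\|$, and the lower bound comes from testing the amplification on the phase-corrected weighted swap (your $Y$ is, up to relabeling indices, exactly the paper's element $\sum e_{ji}\otimes e_{ij}x_{ij}$, whose norm in $M_n(B^{op})$ is $\sup|x_{ij}|$ while its image lands on the diagonal subspace with norm $\|[|\varphi_{ij}|]_{i,j\le n}\|$). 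No substantive differences to report.
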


\begin{proof}
Assume that $[|\varphi_{ij}|]$ is in $B(\ell_2)$. Then the mapping
\begin{align*}
{\cl M}_\varphi\colon \ \ell_\infty({\bb N}\times {\bb N}) &\longrightarrow B(\ell_2)\\
[x_{ij}] &\longrightarrow [x_{ij}\varphi_{ij}]
\end{align*}
is obviously bounded with $\|{\cl M}_\varphi\| = \|[|\varphi_{ij}|]\|_{B(\ell_2)}$. Actually, more generally, if $x_{ij}\in B(H)$ with $\|x_{ij}\|\le 1$, then the matrix $[\varphi_{ij}x_{ij}]$ defines a bounded operator on $\ell_2(H)$ with norm easily seen to be majorized by $\|[|\varphi_{ij}|\|x_{ij}\|]\|_{B(\ell_2)}$ $\le \|[|\varphi_{ij}|]\|_{B(\ell_2)}$. This shows that $\|{\cl M}_\varphi\|_{cb} \le \|[|\varphi_{ij}|]\|_{B(\ell_2)}$. Let $J$ be as above. Clearly we have
\[
\|J\|_{cb} = 1
\]
and hence
\[
M_\varphi = {\cl M}_\varphi J
\]
with $\|{\cl M}_\varphi\|_{cb} = \|[|\varphi_{ij}|]\|_{B(\ell_2)}$. But since $\ell_\infty({\bb N}\times {\bb N})^{op}$ and $\ell_\infty({\bb N}\times {\bb N})$ are identical we can factorize $M_\varphi$ as follows
\[
M_\varphi\colon \ B(\ell_2) \overset{\sst J}{\longrightarrow} \ell_\infty({\bb N} \times {\bb N}) = \ell_\infty({\bb N}\times {\bb N})^{op} \overset{\sst {\cl M}_\varphi}{\hbox to 25pt{\rightarrowfill}} B(\ell_2)^{op}
\]
it follows that
\[
\|M_\varphi\colon \ B(\ell_2)\to B(\ell_2)^{op}\|_{cb}\le \|{\cl M}_\varphi\|_{cb} \le \|[|\varphi_{ij}|]\|_{B(\ell_2)}.
\]
This proves the ``if'' part.

Conversely, assume that $M_\varphi$ is completely co-bounded with $\|M_\varphi\|_{cob} \le 1$.  Let $x= [x_{ij}]$ be an $n\times n$ matrix viewed as sitting in $B(\ell_2)$. Let $B = B(\ell_2)$. Note that
\[
\left\|\sum^n_{ij=1} e_{ij}\otimes e_{ij}x_{ij}\right\|_{M_n(B)} = \|x\|_B
\]
while
\[
\left\|\sum^n_{ij=1} e_{ij}\otimes e_{ij}x_{ij}\right\|_{M_n(B^{op})} = \left\|\sum^n_{ij=1} e_{ji}\otimes e_{ij}x_{ij}\right\|_{M_n(B)} = \sup|x_{ij}|.
\]
By definition of $\|M_\varphi\|_{cob}\le 1$, we have
\[\left\|\sum e_{ij}\otimes e_{ij}\varphi_{ij}x_{ij}\right\|_{M_n(B)}=
\left\|\sum e_{ji}\otimes e_{ij}\varphi_{ij}x_{ij}\right\|_{M_n(B^{op})} \le \left\|\sum e_{ji}\otimes e_{ij}x_{ij}\right\|_{M_n(B)}
\]
which yields
\[
\|[\varphi_{ij}x_{ij}1_{\{i,j\le n\}}]\|_B \le \sup_{ij}|x_{ij}|\le 1.
\]
This implies
\[
\|[|\varphi_{ij}|]_{ij\le n}\|\le 1
\]
and since $n$ is arbitrary we obtain
\[
\|[|\varphi_{ij}|]\|_B \le 1.
\]
This proves the ``only if'' part.
The proof also yields \eqref{compeq1}.
\end{proof}

\begin{cor}\label{compcor3}
A Schur multiplier $M_\varphi$ is completely co-bounded
on $B(\ell_2)$ iff it factors through a commutative $C^*$-algebra or iff it factors through a minimal operator space and the corresponding factorization norm coincides with $\|M_\varphi\|_{cob}$.
\end{cor}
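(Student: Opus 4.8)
The plan is to read the corollary as a repackaging of the factorization already produced in Theorem \ref{compthm2}, the only genuinely new ingredient being that minimal operator spaces are ``self-opposite''. For a map $u\colon\ E\to F$ I write $\gamma_c(u)$ (resp.\ $\gamma_m(u)$) for the infimum of $\|a\|_{cb}\|b\|_{cb}$ over all factorizations $u = ba$ with $a\colon\ E\to X$, $b\colon\ X\to F$ and $X$ a commutative $C^*$-algebra (resp.\ a minimal operator space). Since every commutative $C^*$-algebra carries its minimal o.s.s., every $\gamma_c$-factorization is a $\gamma_m$-factorization, so $\gamma_m(u)\le\gamma_c(u)$ always holds. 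The goal is then to establish the chain
\[
\|M_\varphi\|_{cob}\ \le\ \gamma_m(M_\varphi)\ \le\ \gamma_c(M_\varphi)\ \le\ \|M_\varphi\|_{cob},
\]
which simultaneously yields both equivalences and the equality of norms asserted in the statement.

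For the last inequality I would simply invoke the ``moreover'' clause of Theorem \ref{compthm2}: when $\|M_\varphi\|_{cob}\le1$ it gives $M_\varphi = {\cl M}_\varphi J$ with $J\colon\ B(\ell_2)\to\ell_\infty({\bb N}\times{\bb N})$ of cb norm $1$ and $\|{\cl M}_\varphi\|_{cb}=\|[|\varphi_{ij}|]\|_{B(\ell_2)}=\|M_\varphi\|_{cob}\le1$. Since $\ell_\infty({\bb N}\times{\bb N})$ is a commutative $C^*$-algebra, this is a $\gamma_c$-factorization, and rescaling by homogeneity gives $\gamma_c(M_\varphi)\le\|M_\varphi\|_{cob}$ in general. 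In particular any completely co-bounded $M_\varphi$ does factor through a commutative $C^*$-algebra, hence through a minimal operator space.

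The substance lies in the first inequality, for which the key lemma is that for any minimal operator space $X$ the formal identity $X\to X^{op}$ is a complete isometry. To see this I would fix a completely isometric embedding $X\subset C(K)$ (the defining property of a minimal o.s.s.) and note that transposition is isometric on $M_n$ for the operator norm, whence $\|[a_{ji}]\|_{M_n(C(K))}=\sup_{t}\|[a_{ji}(t)]\|_{M_n}=\sup_{t}\|[a_{ij}(t)]\|_{M_n}=\|[a_{ij}]\|_{M_n(C(K))}$; thus $C(K)=C(K)^{op}$ and a fortiori $X=X^{op}$ completely isometrically. Granting this, take any factorization $M_\varphi = ba$ with $a\colon\ B(\ell_2)\to X$, $b\colon\ X\to B(\ell_2)$ and $X$ minimal. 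Then
\[
\|M_\varphi\|_{cob}=\|ba\colon\ B(\ell_2)\to B(\ell_2)^{op}\|_{cb}\le \|b\colon\ X\to B(\ell_2)^{op}\|_{cb}\,\|a\|_{cb},
\]
and by Remark \ref{rem1} together with $X=X^{op}$ one has $\|b\colon\ X\to B(\ell_2)^{op}\|_{cb}=\|b\colon\ X^{op}\to B(\ell_2)\|_{cb}=\|b\|_{cb}$. Taking the infimum over factorizations yields $\|M_\varphi\|_{cob}\le\gamma_m(M_\varphi)$.

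I expect the only delicate point to be the self-opposite lemma and the correct handling of the opposite on the target $B(\ell_2)^{op}$; once Remark \ref{rem1} is applied this is routine, and the three displayed inequalities then collapse to equalities, proving the corollary. Everything else is bookkeeping around the factorization supplied by Theorem \ref{compthm2}.
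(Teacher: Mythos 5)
Your proposal is correct and follows essentially the same route as the paper: the upper bound $\gamma_c(M_\varphi)\le\|M_\varphi\|_{cob}$ comes from the factorization through $\ell_\infty({\bb N}\times{\bb N})$ in Theorem~\ref{compthm2}, and the lower bound from the fact that any subspace $E$ of a commutative $C^*$-algebra satisfies $E=E^{op}$ completely isometrically (which the paper asserts as ``clear'' and you prove via the pointwise isometry of transposition on $M_n$). The only difference is expository: you spell out the self-opposite lemma and organize the argument as a chain of inequalities between factorization norms, which the paper compresses into two sentences.
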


\begin{proof}
For any commutative $C^*$-algebra $C$ or for any $E\subset C$, we have clearly $E=E^{op}$, so a $cb$-factorization $M_\varphi\colon \ B \overset{\sst u_1}{\longrightarrow} E \overset{\sst u_2}{\longrightarrow} B$ yields
\[
\|M_\varphi\colon \ B\to B^{op}\|_{cb}\le \inf\{\|u_1\|_{cb} \|u_2\|_{cb}\}
\]
where the infimum runs over all possible factorizations. Conversely, the preceding shows the converse with a factorization through $\ell_\infty({\bb N}\times {\bb N})$.
\end{proof}
 \begin{rem}  
Let $G$ be an infinite discrete group.  Consider a 
function $f\colon\ G\to \bb C$, and
 the function $\hat f$
 defined on $G\times G$ by $\hat f(s,t)=f(st^{-1})$. 
By the well known Kesten-Hulanicki criterion (cf. e.g. \cite[Th. 2.4]{P3}) 
 $G$ is amenable iff there is a constant
 $C$ such that for any finitely supported $f$ we have
 $\sum\nolimits_{t\in G} |f{(t)}|\le C \|[|\hat f(s,t)|]\|_{B(\ell_2(G))}$
 and when $G$ is amenable this holds with $C=1$. Thus,
 by Theorem \ref{compthm2},  the inequality
 $$  \sum\nolimits_{t\in G} |f{(t)}|\le C \|M_{\hat f}\|_{cob} $$
 characterizes amenable groups. This should be compared with Bo\.zejko's  and Wysoczanski's criteria described in \cite[p. 54]{P3}
 and  \cite[p. 38]{P3}.
   \end{rem}

We now generalize Theorem \ref{compthm2} to the Schur multipliers that are bounded on the Schatten $p$-class $S_p$. We assume $S_p$ equipped with  the ``natural" operator space structure introduced in \cite{P}
using the complex interpolation method. We will use freely the notation and results from  \cite{P}.

\begin{thm}\label{compthm4}
Let $2\le p\le \infty$. Let $T\colon \ S_p\to S_p$ denote again the transposition mapping $x\to {}^tx$. Then a bounded Schur multiplier $M_\varphi\colon \ S_p\to S_p$ is completely co-bounded iff it admits a factorization as follows:
\[
S_p \overset{\sst J_p}{\hbox to 25pt{\rightarrowfill}} \ell_p({\bb N}\times {\bb N}) \overset{\sst {\cl M}_\varphi}{\hbox to 25pt{\rightarrowfill}} S_p
\]
where $J_p$ is the natural (completely contractive) inclusion and where $\|M_\varphi\|_{cob} = \|{\cl M}_\varphi\|_{cb}$.
\end{thm}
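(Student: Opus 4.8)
The plan is to prove the two inequalities $\|M_\varphi\|_{cob}\le\|{\cl M}_\varphi\|_{cb}$ and $\|{\cl M}_\varphi\|_{cb}\le\|M_\varphi\|_{cob}$ separately, in exact analogy with the two halves of Theorem \ref{compthm2}; the role played there by $\ell_\infty=\ell_\infty^{op}$ is now played by its $\ell_p$-analogue. First I would record, from \cite{P} and complex interpolation between the endpoints $p=\infty$ and $p=2$, the two facts: (a) $\ell_p(\bb N\times\bb N)=\ell_p(\bb N\times\bb N)^{op}$ completely isometrically, since passing to the opposite commutes with interpolation and both $\ell_\infty$ and $\ell_1$ are self-opposite; and (b) the inclusion $J_p$ is completely contractive, by interpolating the completely contractive $J_\infty$ and the complete isometry $J_2$ (at $p=2$, $S_2$ and $\ell_2(\bb N\times\bb N)$ are the same operator Hilbert space $OH$ and $J_2$ is the unitary identifying them).

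Granting (a) and (b), the ``if'' direction is immediate: from ${\cl M}_\varphi J_p=M_\varphi$ and $\ell_p=\ell_p^{op}$ we may read the factorization as a composition $S_p\to\ell_p=\ell_p^{op}\to S_p^{op}$, so that by Remark \ref{rem1}
\[
\|M_\varphi\|_{cob}\le\|J_p\|_{cb}\,\|{\cl M}_\varphi\colon\ell_p^{op}\to S_p^{op}\|_{cb}=\|{\cl M}_\varphi\colon\ell_p\to S_p\|_{cb}.
\]

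The substance is the reverse inequality, for which I would generalize the test-matrix computation in Theorem \ref{compthm2}. By truncation it suffices to take $\varphi$ supported on $\{i,j\le n\}$ and to bound, uniformly in $m$, the norm of $({\cl M}_\varphi)_m\colon M_m(\ell_p^{n^2})\to M_m(S_p^n)$. I write an element of $M_m(\ell_p^{n^2})$ as a family $(Y^{ij})_{i,j\le n}$ with $Y^{ij}\in M_m$; by \cite{P} its norm is $(\sum_{ij}\|Y^{ij}\|_{S_p^m}^p)^{1/p}$, and $({\cl M}_\varphi)_m(Y^{ij})=\sum_{ij}\varphi_{ij}\,e_{ij}\otimes Y^{ij}\in M_m(S_p^n)$. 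I then feed the co-bounded map $M_\varphi$ the matrix-valued analogue of the lift $\sum e_{ji}\otimes e_{ij}x_{ij}$ used in Theorem \ref{compthm2}, namely
\[
X=\sum_{ij} e_{ji}\otimes{}^tY^{ij}\otimes e_{ij}\in M_{nm}(S_p^n),
\]
where the first two legs $e_{ji}\otimes{}^tY^{ij}\in M_n\otimes M_m$ form the amplification $M_{nm}$ and the last leg $e_{ij}$ lies in $S_p^n$. Two norm identities then finish the proof. First, the pairing of the $S_p^n$-index $(i,j)$ of $e_{ij}$ with the amplification index $(j,i)$ of $e_{ji}$ places the blocks ${}^tY^{ij}$ in distinct rows and distinct columns; hence $X$ is a monomial element of the vector-valued space and $\|X\|_{M_{nm}(S_p^n)}=(\sum_{ij}\|Y^{ij}\|_{S_p^m}^p)^{1/p}=\|(Y^{ij})\|_{M_m(\ell_p^{n^2})}$. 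Second, the opposite operation transposes the amplification $M_{nm}$, sending $e_{ji}\mapsto e_{ij}$ and ${}^tY^{ij}\mapsto Y^{ij}$; now the two $M_n$-legs carry the same index $e_{ij}$ and the element collapses onto a single corner, giving $\|(M_\varphi)_{nm}X\|_{M_{nm}((S_p^n)^{op})}=\|\sum_{ij}\varphi_{ij}\,e_{ij}\otimes Y^{ij}\|_{M_m(S_p^n)}=\|({\cl M}_\varphi)_m(Y^{ij})\|_{M_m(S_p^n)}$. Combining these with the defining bound $\|(M_\varphi)_{nm}X\|_{M_{nm}((S_p^n)^{op})}\le\|M_\varphi\|_{cob}\,\|X\|_{M_{nm}(S_p^n)}$ gives $\|({\cl M}_\varphi)_m\|\le\|M_\varphi\|_{cob}$ for every $m$, hence the reverse inequality and, with the ``if'' part, the asserted equality of norms.

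The step I expect to be the main obstacle is the verification of these two norm identities: that the lift $X$ (monomial) and its opposite image (diagonally paired) are structured elements of the vector-valued Schatten spaces $M_N(S_p^n)$ of \cite{P} whose norms collapse respectively to an $\ell_p$-sum of block norms and to a genuine $M_m(S_p^n)$-norm. This is exactly where the machinery of \cite{P} is needed. Both reduce to two Fubini-type principles there: that $u\otimes\mathrm{id}$ acts completely isometrically for a permutation unitary $u$ (allowing a monomial element to be rearranged into a block-diagonal one) and that the vector-valued norm of a block-diagonal element is the $\ell_p$-sum of the blockwise norms---the interpolant of the supremum at $p=\infty$, where the computation is exactly the one carried out in Theorem \ref{compthm2}, and the sum at $p=1$. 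Once these are in place the remainder is bookkeeping.
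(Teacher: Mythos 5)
Your overall skeleton is the right one — the ``if'' direction via $\ell_p=\ell_p^{op}$, and the reverse inequality by feeding the co-bounded map a ``monomial'' lift whose opposite collapses onto a diagonally-paired element — and these are indeed the two test elements the paper uses. But there is a genuine gap: you carry out the whole reverse-inequality computation in ordinary matrix amplifications $M_m(\cdot)$, $M_{nm}(\cdot)$, and the two norm identities your chain rests on are \emph{false} there. In $M_N(E)$ a block-diagonal (or monomial) element always has norm equal to the \emph{maximum} of the block norms, never an $\ell_p$-sum: for instance $\|e_{11}\otimes e_{11}+e_{22}\otimes e_{22}\|_{M_2(S_p^2)}=1$, not $2^{1/p}$, so your first identity $\|X\|_{M_{nm}(S_p^n)}=(\sum_{ij}\|Y^{ij}\|^p_{S_p^m})^{1/p}$ fails for every $p<\infty$. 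Likewise your formula for the source norm is wrong: $M_m(\ell_p^{N})$ does not carry the norm $(\sum_k\|Y^k\|^p_{S_p^m})^{1/p}$ (take $N=1$: $M_m(\ell_p^1)=M_m$ carries the operator norm, not the $S_p^m$-norm). Since the true norm of $M_m(\ell_p^{n^2})$ is \emph{smaller} than your claimed $\ell_p$-sum, the bound you derive does not dominate $\|({\cl M}_\varphi)_m\|$ against the actual norm, and the argument does not close. The ``Fubini-type principle'' you invoke at the end (block-diagonal elements have $\ell_p$-sum norms) is a theorem about Pisier's vector-valued spaces $S_p^N[\,\cdot\,]$, not about $M_N(\cdot)$; conflating the two amplification categories is exactly the error.

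The missing ingredient — and the route the paper takes — is the lemma from \cite{P} that c.b.\ norms can be computed with Schatten-class amplifications: $\|u\|_{cb}=\|\mathrm{id}_{S_p}\otimes u\colon S_p[E]\to S_p[F]\|$. Once you replace $M_m$ and $M_{nm}$ throughout by $S_p^m$ and $S_p^{nm}$, your two test elements are precisely the right ones, and the needed identities do hold: in $S^n_p[S^n_p[S_p]]$ one has $\|\sum e_{ij}\otimes e_{ji}\otimes x_{ij}\|=(\sum_{ij}\|x_{ij}\|^p_{S_p})^{1/p}$ for the monomial configuration and $\|\sum e_{ij}\otimes e_{ij}\otimes x_{ij}\|=\|[x_{ij}]\|_{S^n_p[S_p]}$ for the diagonally-paired one, both proved by interpolation between $p=\infty$ (where your $M_N$ intuition is correct — this is the computation of Theorem \ref{compthm2}) and $p=2$. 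Feeding the first element to $\mathrm{id}\otimes M_\varphi\colon S_p^{n^2}[S_p]\to S_p^{n^2}[S_p^{op}]$ then yields $\|[\varphi_{ij}x_{ij}]\|_{S^n_p[S_p]}\le\|M_\varphi\|_{cob}(\sum\|x_{ij}\|^p_{S_p})^{1/p}$, which by the same lemma of \cite{P} is exactly the statement $\|{\cl M}_\varphi\|_{cb}\le\|M_\varphi\|_{cob}$. So the repair is systematic, but as written your proof asserts norm computations that are simply not valid in the category in which you perform them.
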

\begin{proof}
Note that the fact that $J_p\colon \ S_p\to\ell_p({\bb N}\times {\bb N})$ is completely contractive is immediate by interpolation between the cases $p=2$ and $p=\infty$.\\
The proof can then be completed following the same idea as for Theorem \ref{compthm2}. We have for any $[x_{ij}]$ in $M_n(S_p)$
\[
\left\|\sum e_{ij} \otimes e_{ji}\otimes x_{ij}\right\|_{S^n_p[S^n_p[S_p]]} = \left(\sum_{ij} \|x_{ij}\|^p_{S_p}\right)^{1/p}
\]
while
\[
\left\|\sum e_{ij}\otimes e_{ij}\otimes x_{ij}\right\|_{S^n_p[S^n_p[S_p]]} = \|[x_{ij}]\|_{S^n_p[S_p]}.
\]
Both of these identities can be proved by routine interpolation arguments starting from $p=\infty$ and $p=2$. This gives us
\[
\|[\varphi_{ij}x_{ij}]\|_{S^n_p[S_p]} \le \|M_\varphi\|_{cob} \left(\sum\|x_{ij}\|^p_{S_p}\right)^{1/p}
\]
which means (cf.\ \cite{P}) that
\[
\|{\cl M}_\varphi\|_{cb}\le \|M_\varphi\|_{cob}.
\]
To prove the converse, it suffices to notice again that
\[
\ell_p({\bb N}\times {\bb N})^{op} = \ell_p({\bb N}\times {\bb N}).\qquad \qed
\]
\renewcommand{\qed}{}\end{proof}
\begin{rem} Consider Schur multipliers from
$B(\ell_2)$ (or the subalgebra
of compact operators $K$)  into the trace class $S_1$. We refer to \cite{PS} for a detailed discusion of when such a multiplier
is bounded and when it is c.b. From that discussion
follows easily that such a multiplier is completely co-bounded
iff it is bounded. Indeed, more generally (see \cite{PS,HM2}), if $A,B$ are $C^*$-algebras 
a linear map $u\colon\ A\to B^*$ is completely co-bounded iff
there are a constant $c$ and states $f_1,f_2$, $g_1,g_2$ on $A,B$ respectively, such that for any $(a,b)\in A\times B$
\begin{equation}\label{eq10}|\langle u(a),b\rangle |\le c\left(( f_1(a^*a)g_1(b^*b))^{1/2}+(f_2(aa^*)g_2(bb^*))^{1/2}\right).\end{equation}
Consider a bounded Schur multiplier $\varphi=[\varphi_{ij}]$
from $B(\ell_2)$ (or  $K$)   to   $S_1$,
where $S_1$ is equipped with its natural
o.s.s. as the dual of $K$. By this we mean
that $\langle M_\varphi(a),b\rangle=\sum \varphi_{ij} a_{ij}b_{ij}$. Then (see \cite{PS})
 there are two nonnegative summable sequences $(\lambda_i)$
and $(\mu_i)$   such that for any $i,j$
$$|\varphi_{ij}|\le \lambda_i+\mu_j.$$
Then, using the Cauchy-Schwarz inequality,  we obtain \eqref{eq10}
with the states $f_1=g_1=(\sum \lambda_j )^{-1}\sum \lambda_j e_{jj}$
and $f_2=g_2=(\sum \mu_j )^{-1}\sum \mu_j e_{jj}$. This shows that $M_\varphi$ is automatically completely co-bounded. See \cite{X2}
for an extension to Schur multipliers
from $S_p$ to $S_{p'}$ with $2< p< \infty$.

 \end{rem} 
\begin{rem} The preceding two theorems illustrate the following simple observation: Assume that a linear map $u\colon \ E  \to F$ is both c.b. and  
completely co-bounded, then  $u$ can be completely boundedly factorized through an operator space $G$ for which the identity map
$I_G$
is completely co-bounded with $\|I_G\|_{cob}=1$. Indeed, 
we just consider  $G=F \cap F^{op}$ in the sense of \cite{P}
(this means $G=F$ equipped with the o.s.s. induced by the diagonal  embedding $F\subset F\oplus F^{op}$), then $u$ can be viewed
as $u\colon \ E\to G \to F$, with $\|u\colon \ E\to G\|_{cb}=\max\{ \|u\|_{cb}, \|u\|_{cob}\}$ and $\|G \to F\|_{cb}=1$.\\
Conversely, any mapping of the form
 $u\colon\ E\stackrel{v}\to G\stackrel{w  }\to F$,
with c.b. maps $v,w$   and $G$ such that  the identity $I=I_G$ on $G$
is completely co-bounded, must be both c.b. and
completely co-bounded (since
$u\colon\ E\stackrel{v}\to G\stackrel{I}\to G^{op}\stackrel{w  }\to F^{op}$ is c.b.).\\
Let us say that an operator space $G$ is self-transposed
if $I_G$ is completely co-bounded. This property passes obviously
to subspaces, quotients (and hence subquotients) and dual spaces.
It is also stable under ultraproducts.
Examples include any commutative $C^*$-algebra (or 
 any minimal operator space), 
by duality   any $L_1$-space (or any maximal 
operator space) and by interpolation    any $L_p$-space
($1\le p\le \infty$). Perhaps there is a nice characterization of
self-transposed operator spaces?
\end{rem}

Let $G$ be a finite group. Let $f,g\colon \ G\to {\bb C}$ be functions on $G$ and let $\lambda(f)\colon\ x\to f*x$ and $\rho(g)\colon \ x\to x*g$ be the associated convolutors on $\ell_2(G)$.

The Fourier transform of $f$ is defined as follows: \ for any irreducible representation $\pi$ on $G$ (i.e.\ $\pi\in\widehat G$) 
\[
\hat f(\pi) = \int f(t) \pi(t)^* dm(t)
\]
where $m$ is the normalized Haar measure on $G$. We have then

\begin{prop}
With the above notation, we have 
\[
\|\lambda(f)\rho(g)\|_{cob} = \sup\nolimits_{\pi\in\widehat G} \|\hat f(\pi)\|_2 \|\hat g(\pi)\|_2
\]
where $\|~~\|_2$ denotes the Hilbert--Schmidt norm on $H_\pi$. In particular,
\[
\|Id\colon \ C^*(G)\to C^*(G)\|_{cob} = \sup\nolimits_{\pi\in\widehat G} \dim(\pi).
\]
\end{prop}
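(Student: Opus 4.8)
The plan is to identify $\lambda(f)\rho(g)$ with a block-diagonal two-sided multiplication map on the finite-dimensional $C^*$-algebra $C^*(G)=\bigoplus_{\pi\in\widehat G} M_{d_\pi}$ (with $d_\pi=\dim\pi$), and then to reduce everything to one clean computation: the completely co-bounded norm of a map of the form $u\colon x\mapsto axb$ on a matrix algebra $M_d$. First I would recall the Peter--Weyl/Fourier picture for finite $G$: the Fourier transform is a $*$-isomorphism $C^*(G)\cong\bigoplus_\pi M_{d_\pi}$ under which $\lambda(f)$ becomes left multiplication by $\hat f(\pi)$ on the $\pi$-block and $\rho(g)$ becomes right multiplication by a matrix unitarily/transpose-equivalent to $\hat g(\pi)$. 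Viewing $C^*(G)$ as the canonical dense subspace of its GNS space $\ell_2(G)$, the operator $\lambda(f)\rho(g)$ is thus the map $\Phi=\bigoplus_\pi u_\pi$ with $u_\pi(\xi)=\hat f(\pi)\,\xi\,\hat g(\pi)$ on each $M_{d_\pi}$. Since the Hilbert--Schmidt norm $\|\cdot\|_2$ is invariant under transposition and adjunction, the exact placement of transposes and conjugates in this identification is irrelevant to the final formula, which is a useful simplification.

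The computational heart is the claim that for $a,b\in M_d$ and $u(x)=axb$ on $M_d$ one has $\|u\|_{cob}=\|a\|_2\|b\|_2$. To prove it I would take singular value decompositions $a=UD_\alpha V^*$ and $b=WD_\beta Z^*$. The conjugations $x\mapsto V^*xW$ on the source and $y\mapsto UyZ^*$ on the target are complete isometries of $M_d$ and also of $M_d^{op}$ (the transpose of a unitary is again unitary), so they may be pulled out of the $cob$-norm. What remains is exactly the Schur multiplier $M_\varphi$ with symbol $\varphi_{ij}=\alpha_i\beta_j$, since $D_\alpha\,y\,D_\beta=[\alpha_i\beta_j\,y_{ij}]$. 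Theorem~\ref{compthm2} then gives $\|u\|_{cob}=\|M_\varphi\|_{cob}=\|[\alpha_i\beta_j]\|_{B(\ell_2)}$, and the matrix $[\alpha_i\beta_j]$ is rank one with operator norm $\|\alpha\|_2\|\beta\|_2=\|a\|_2\|b\|_2$.

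Finally I would assemble the pieces. Because $C^*(G)=\bigoplus_\pi M_{d_\pi}$ and $C^*(G)^{op}=\bigoplus_\pi M_{d_\pi}^{op}$ are $\ell_\infty$-direct sums, the block-diagonal map $\Phi$ satisfies $\|\Phi\|_{cob}=\|\Phi\colon C^*(G)\to C^*(G)^{op}\|_{cb}=\sup_\pi\|u_\pi\|_{cob}$. Combined with the claim this yields $\|\lambda(f)\rho(g)\|_{cob}=\sup_\pi\|\hat f(\pi)\|_2\|\hat g(\pi)\|_2$. The ``in particular'' statement is the special case of the identity map, for which every block is $u_\pi(\xi)=\xi$, i.e.\ $a_\pi=b_\pi=I_{d_\pi}$; then $\|\hat f(\pi)\|_2\|\hat g(\pi)\|_2=\|I_{d_\pi}\|_2^2=d_\pi$, giving $\sup_\pi\dim(\pi)$ (consistent with Remark~\ref{rem2} and the value $n$ for transposition on $M_n$).

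I expect the only real obstacle to be the bookkeeping of the first step: getting the Fourier/GNS identification right and determining the side on which each $\hat f(\pi)$ and $\hat g(\pi)$ acts. The transposition/adjoint invariance of $\|\cdot\|_2$ removes most of the danger there, since it means I only need the block maps to be of the form $\xi\mapsto a_\pi\xi b_\pi$ with $\|a_\pi\|_2=\|\hat f(\pi)\|_2$ and $\|b_\pi\|_2=\|\hat g(\pi)\|_2$, and need not track the conventions precisely. The SVD-to-Schur-multiplier reduction together with Theorem~\ref{compthm2} is the key idea and is essentially free of technical difficulty once set up.
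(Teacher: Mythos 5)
Your proof is correct, and its overall skeleton is the same as the paper's: pass to the Fourier side, where $\lambda(f)\rho(g)$ becomes a block-diagonal map $\bigoplus_\pi u_\pi$ with each $u_\pi$ a two-sided multiplication, and then compute the completely co-bounded norm of such a multiplication. The difference is in how that key computation is done. The paper proves a stand-alone Lemma for $u(x)=axb$ on $M_n$: it applies the amplification of $u$ to $\sum_{ij} e_{ij}\otimes e_{ij}$, whose norm in $M_n(M_n^{op})$ equals $\bigl\|\sum_{ij} e_{ji}\otimes e_{ij}\bigr\|_{M_n(M_n)}=1$ (the flip unitary), and observes that the output $\sum_{ij} e_{ij}\otimes a e_{ij} b$ is a rank-one operator on $\ell_2^n(\ell_2^n)$ of norm exactly $\|a\|_2\|b\|_2$; this yields the lower bound $\|a\|_2\|b\|_2\le\|u\|_{cob}$, with the reverse inequality left implicit. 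Your route — singular value decomposition, pulling the unitaries out (legitimately, since multiplication by unitaries is completely isometric both on $M_d$ and on $M_d^{op}$), and recognizing $y\mapsto D_\alpha y D_\beta$ as the Schur multiplier with rank-one symbol $[\alpha_i\beta_j]$ — is genuinely different and has a real advantage: since Theorem~\ref{compthm2} is an equality and $\|[\alpha_i\beta_j]\|_{B(\ell_2)}=\|\alpha\|_2\|\beta\|_2$, it delivers both inequalities in one stroke, so nothing is left implicit. You also spell out the $\ell_\infty$-direct-sum step $\|\bigoplus_\pi u_\pi\|_{cob}=\sup_\pi\|u_\pi\|_{cob}$, which the paper uses silently. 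Two minor points: Theorem~\ref{compthm2} is stated for $B(\ell_2)$, so applying it to $M_d$ needs the (trivial) remark that extending the symbol by zero changes neither side of \eqref{compeq1}; and your argument, unlike the paper's Lemma, is purely finite-dimensional — which is all the Proposition requires, but it does not recover the statement that on an infinite-dimensional $H$ complete co-boundedness of $x\mapsto axb$ forces $a$ and $b$ to be Hilbert--Schmidt.
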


\begin{proof}
Passing to Fourier transforms, we see that $\lambda(f) \rho(g)$ coincides with $\bigoplus\limits_{\pi\in\widehat G} L(\hat f(\pi)) R(\hat g(\pi))$ where $L(a)$ (resp.\ $R(a)$) denotes left (resp.\ right) multiplication by $a$ on $H_\pi$. Thus the result follows from the next lemma. 
\end{proof}

\begin{lm}
Let $H$ be a Hilbert space and let $u\colon \ B(H)\to B(H)$ be defined by $u(x) = a x b$. Then $u$ is completely co-bounded iff $a,b$ are both Hilbert--Schmidt operators and $\|u\|_{cob} = \|a\|_2 \|b\|_2$.
\end{lm}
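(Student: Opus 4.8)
The plan is to reduce everything to the Schur multiplier case already settled in Theorem \ref{compthm2}. Write $B = B(H)$, and note we may assume $a\neq 0$ and $b\neq 0$, the case of a zero factor being trivial. The first step is to record that $\|u\|_{cob}$ is unchanged if we replace $a$ by $w_1 a w_2$ and $b$ by $w_3 b w_4$ for unitaries $w_1,w_2,w_3,w_4$. Indeed $u(x) = w_1 a (w_2 x w_3) b w_4$, so $u = \Psi\circ u_0\circ\Phi$ with $\Phi(x) = w_2 x w_3$, $u_0(y) = ayb$ and $\Psi(z) = w_1 z w_4$; here $\Phi$ is a complete isometry of $B$, while $\Psi$ is a complete isometry of $B^{op}$, since one checks directly that $\|[w_1 z_{ij} w_4]\|_{M_n(B^{op})} = \|[z_{ji}]\|_{M_n(B)} = \|[z_{ij}]\|_{M_n(B^{op})}$. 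Hence $\|u\|_{cob} = \|u_0\|_{cob}$, so $\|u\|_{cob}$ depends only on the singular values of $a$ and $b$; the numbers $\|a\|_2,\|b\|_2$ are of course unitarily invariant as well.

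Assuming first that $a$ and $b$ are compact, I would use their singular value decompositions together with the invariance above to arrange that $a = \mathrm{diag}(\alpha_p)$ and $b = \mathrm{diag}(\beta_q)$ are diagonal and positive in a fixed orthonormal basis $(e_p)$, with $\|a\|_2^2 = \sum_p\alpha_p^2$ and $\|b\|_2^2 = \sum_q\beta_q^2$. Then $(u(x))_{pq} = \alpha_p x_{pq}\beta_q$, i.e. $u$ is exactly the Schur multiplier $M_\varphi$ with symbol $\varphi_{pq} = \alpha_p\beta_q$. Since the matrix $[\alpha_p\beta_q]$ has rank one with operator norm $\|\alpha\|_2\|\beta\|_2$, Theorem \ref{compthm2} gives at once
\[
\|u\|_{cob} = \|[\alpha_p\beta_q]\|_{B(\ell_2)} = \|a\|_2\|b\|_2,
\]
which is finite precisely when both $a$ and $b$ are Hilbert--Schmidt. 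This disposes of the compact case, including the asserted equivalence with $a,b\in S_2$.

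It then remains to rule out the case where $a$ (or $b$) is not compact, hence not Hilbert--Schmidt; here I must show $\|u\|_{cob}=\infty$. Fix an orthonormal basis $(e_i)$ with coordinate projections $P_n$ onto $\mathrm{span}(e_1,\dots,e_n)$. Restricting $u$ to the corner $B(P_nH)$ (extend $x$ by $0$ and compress the output by $P_n$) yields $\tilde u_n(x) = (P_naP_n)x(P_nbP_n)$ on the finite-dimensional $B(P_nH)$, and since compression and inclusion are completely contractive and compatible with the opposite structure, $\|u\|_{cob}\ge\|\tilde u_n\|_{cob}$. By the finite-dimensional case already proved, $\|\tilde u_n\|_{cob} = \|P_naP_n\|_2\|P_nbP_n\|_2$. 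Finally $\|P_naP_n\|_2^2 = \sum_{i,j\le n}|\langle e_j, ae_i\rangle|^2 \to \sum_{i,j}|\langle e_j,ae_i\rangle|^2 = \|a\|_2^2 = \infty$, while $\|P_nbP_n\|_2\to\|b\|_2 > 0$, so the products tend to $\infty$ and $u$ is not completely co-bounded.

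The main obstacle — and the only step needing genuine care — is the reduction in the first paragraph: verifying that two-sided multiplication by unitaries is completely isometric for the \emph{opposite} operator space structure, which is exactly what lets the singular value decomposition convert $u$ into a rank-one-symbol Schur multiplier and bring Theorem \ref{compthm2} to bear. Alternatively, the upper bound $\|u\|_{cob}\le\|a\|_2\|b\|_2$ can be obtained conceptually by factoring $u$ through the self-transposed space $S_2$ as $B(H)\to S_2\to B(H)$ via left multiplication by $a$ and right multiplication by $b$, of cb-norms $\|a\|_2$ and $\|b\|_2$ respectively; but the reduction to Theorem \ref{compthm2} is the more self-contained route.
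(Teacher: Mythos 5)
Your main argument is correct, but it takes a genuinely different route from the paper. The paper does not diagonalize: it tests $u$ directly on the element $\sum e_{ij}\otimes e_{ij}$, whose norm in $M_n(B^{op})$ equals $1$, and computes that the image $\sum e_{ij}\otimes ae_{ij}b$ is a rank-one operator on $\ell_2^n(H)$ of norm exactly $\left(\sum\|ae_i\|^2\right)^{1/2}\left(\sum\|b^*e_j\|^2\right)^{1/2}$; this yields the lower bound $\|a\|_2\|b\|_2\le\|u\|_{cob}$ for arbitrary $a,b$ in one stroke, hence the necessity of $a,b\in S_2$, without separating the compact and non-compact cases. Your route instead establishes unitary invariance of $\|\cdot\|_{cob}$, uses the singular value decomposition to turn $u$ into the Schur multiplier with rank-one symbol $[\alpha_p\beta_q]$, and quotes Theorem \ref{compthm2}; this has the advantage of also delivering the upper bound $\|u\|_{cob}\le\|a\|_2\|b\|_2$ (via the factorization through $\ell_\infty$ in Theorem \ref{compthm2}), a direction the paper's sketch leaves implicit, at the cost of the extra bookkeeping in your first and third paragraphs. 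Those verifications (complete isometry of $z\mapsto w_1zw_4$ on $B^{op}$, and the compression argument when $a$ or $b$ is not compact) are correct. One caveat: the closing ``alternative'' is wrong as stated. Left multiplication $x\mapsto ax$ is not completely bounded from $B(H)$ into $S_2$ with its natural o.s.s.: since $M_n(S_2)=S_2^n[S_2]=S_2(\ell_2^n\otimes H)$, applying the amplification to a unitary $U\in M_n(B(H))$ gives $\|(I_n\otimes a)U\|_{S_2}=\sqrt{n}\,\|a\|_2$, so the proposed factorization $B(H)\to S_2\to B(H)$ does not have the stated cb-norms. As this is only an aside, it does not affect the validity of your proof.
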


\begin{proof}
We may easily reduce this to the finite dimensional case. So we assume $B(H) = M_n$. Then again we can write
\[
\left\|\sum e_{ij}\otimes ae_{ij}b\right\| \le \|u\|_{cob} \left\|\sum e_{ji}\otimes e_{ij}\right\| = \|u\|_{cob}.
\]\
But now, $T = \sum e_{ij}\otimes ae_{ij}b$ is a rank one operator on $\ell^n_2 \otimes \ell^n_2 = \ell^n_2(H)$ with $H=\ell^n_2$. Indeed, for any $h= (h_i)$, $k=(k_j)\in \ell^n_2(H)$ we have:
\begin{align*}
\langle Th,k\rangle &= \sum_{ij} \langle ae_{ij}bh_j,k_i\rangle\\
&= \sum_i \langle ae_i,k_i\rangle \sum_j \langle h_j,b^*e_j\rangle\\
&= \langle (ae_i),k\rangle \langle h,(b^*e_j)\rangle
\end{align*}
and hence
\[
\|T\| = \left(\sum \|ae_i\|^2\right)^{1/2} \left(\sum \|b^*e_j\|^2\right)^{1/2} = \|a\|_2 \|b\|_2.\qquad \qed
\]
\renewcommand{\qed}{}\end{proof}

Let us denote by $C_\lambda^*(G)$ the reduced $C^*$-algebra of a discrete group $G$, i.e. the $C^*$-algebra generated by the left regular representation $\lambda\colon \ G\to B(\ell_2(G))$. 
By
a Herz-Schur multiplier on $C_\lambda^*(G)$, we mean
a bounded linear  map $T$ on $C_\lambda^*(G)$ for which there is a function $f\colon\ G\to \bb C$ such that, for any $t\in G$
$$ T(\lambda(t))=f(t) \lambda(t).$$
 We then denote ${  T}_f=T$. 
 \begin{cor} If $G$ is a finite group and $f$ is the   function
 constantly equal to 1, then
 $\|{T}_f\|_{cob}$ (i.e. the identity map
 on  $C_\lambda^*(G)$)  
 is equal to the supremum of the dimensions
 of the irreducible representations of $G$.
 \end{cor}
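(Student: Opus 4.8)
The plan is to reduce the statement to the Proposition proved just above, exploiting two elementary observations. First I would note that the choice $f\equiv 1$ turns $T_f$ into the identity map on $C_\lambda^*(G)$: by definition $T_f(\lambda(t))=f(t)\lambda(t)=\lambda(t)$ for every $t\in G$, and since the operators $\lambda(t)$ linearly span $C_\lambda^*(G)$ (here $G$ is finite, so the group algebra is already finite-dimensional), this forces $T_f=\mathrm{Id}$. Hence $\|T_f\|_{cob}=\|\mathrm{Id}\colon\ C_\lambda^*(G)\to C_\lambda^*(G)\|_{cob}$, and the whole problem is to evaluate the cob-norm of this identity map.

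Second, I would invoke finiteness of $G$ to pass from the reduced to the full algebra. For a finite group the left regular representation is faithful on the finite-dimensional group algebra, so $C_\lambda^*(G)$ and $C^*(G)$ are one and the same concrete $C^*$-algebra, completely isometrically. Consequently the preceding Proposition applies verbatim and its ``in particular'' clause gives $\|\mathrm{Id}\colon\ C^*(G)\to C^*(G)\|_{cob}=\sup_{\pi\in\widehat G}\dim(\pi)$, which is exactly the asserted value. Read this way, the corollary is simply that clause re-expressed for the reduced algebra.

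If one prefers a self-contained derivation not citing the Proposition, I would instead use the Peter--Weyl decomposition $C_\lambda^*(G)\cong\bigoplus_{\pi\in\widehat G}B(H_\pi)=\bigoplus_{\pi\in\widehat G}M_{\dim(\pi)}$, a completely isometric $*$-isomorphism. Since the opposite of a direct sum is the direct sum of the opposites and the identity respects the block structure, one obtains $\|\mathrm{Id}\|_{cob}=\sup_{\pi\in\widehat G}\|\mathrm{Id}_{B(H_\pi)}\|_{cob}$, a supremum over a \emph{finite} index set. By Remark \ref{rem2}, on each finite-dimensional block $\|\mathrm{Id}_{M_n}\|_{cob}=\|T\|_{cb}=n$ with $T$ the transposition; taking $n=\dim(\pi)$ and the supremum yields $\sup_{\pi\in\widehat G}\dim(\pi)$.

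The only delicate point, and the main (rather mild) obstacle, is to make sure every identification above is \emph{completely} isometric, so that cb- and cob-norms are genuinely preserved: namely that $C_\lambda^*(G)=C^*(G)$ as operator spaces for finite $G$, that the Fourier isomorphism onto $\bigoplus_{\pi}M_{\dim(\pi)}$ is a complete isometry, and that passing to a finite direct sum converts the cob-norm of a diagonal map into the maximum of the block cob-norms. Once these routine verifications are granted, the transposition cb-norm computation on $M_n$ recalled in Remark \ref{rem2} completes the argument.
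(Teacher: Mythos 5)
Your proposal is correct and matches the paper's intended argument: the corollary is stated as an immediate consequence of the preceding Proposition, exactly via your two observations that $f\equiv 1$ makes $T_f$ the identity map and that $C_\lambda^*(G)=C^*(G)$ completely isometrically for finite $G$, so the Proposition's ``in particular'' clause gives the value $\sup_{\pi\in\widehat G}\dim(\pi)$. Your alternative Peter--Weyl route is also sound, and is in substance the same computation that underlies the Proposition's own proof (Fourier decomposition into blocks $B(H_\pi)$ plus the rank-one/transposition norm calculation on $M_n$).
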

 \begin{rem} By Theorem \ref{compthm2}, if $f\equiv 1$ as above, then the function 
 defined on $G\times G$ by ${\hat f}(s,t)=f(st^{-1})$ (that is also constantly equal to 1), 
 satisfies (when viewed as a Schur multiplier on 
 $B(\ell_2(G)$) $\|M_{\hat f}\|_{cob} =|G|$ and in general this is different
 from $\|{T}_f\|_{cob}=\sup \{  \dim(\pi)\mid {\pi\in\widehat G}\}.$\\
Now let $G$ be an infinite discrete group.  By Remarks \ref{rem1} and \ref{rem2}, the identity map on  $A=C_\lambda^*(G)$ is completely co-bounded iff the same is true for $A^{**}$, and this implies that 
 the latter is a  direct sum
of finitely many algebras of the form $M_n \otimes A_n$, with $A_n$ commutative. In particular, of course 
$A^{**}$ is injective, $A$ is nuclear and hence $G$ is amenable.

   \end{rem}

It would be interesting to describe the 
completely co-bounded Herz-Schur multipliers
on the reduced $C^*$-algebra of a discrete group $G$ in analogy
with what is known for the c.b. ones (see \cite{BF}).

\n\textbf{Acknowledgement.} I am grateful to Marek Bo\.zejko and
Marcin Marciniak for stimulating communication.

\end{document}